\DeclareSymbolFont{cyrletters}{OT2}{wncyr}{m}{n}
\DeclareMathSymbol{\Sha}{\mathalpha}{cyrletters}{"58}
\DeclareMathSymbol{\Che}{\mathalpha}{cyrletters}{"51}
\newcommand{\Ga}{{\mathbf{G}}_{\rm{a}}}
\newcommand{\Gm}{{\mathbf{G}}_{\rm{m}}}
\DeclareMathOperator{\red}{red}
\DeclareMathOperator{\perf}{perf}
\DeclareMathOperator{\Pic}{Pic}
\DeclareMathOperator{\Hom}{Hom}
\DeclareMathOperator{\et}{\acute{e}t}
\newcommand*{\Z}{\ensuremath{\mathbf{Z}}}                        % integers
\newcommand*{\address}{Einstein Institute of Mathematics, The Hebrew University of Jerusalem, Edmond J. Safra Campus, 91904, Jerusalem, Israel}
\newcommand*{\email}{zev.rosengarten@mail.huji.ac.il}
\newtheorem{theorem}{Theorem}
\newtheorem{lemma}[theorem]{Lemma}
\theoremstyle{definition}
  \theoremstyle{remark}
\theoremstyle{remark}
\tikzset{commutative diagrams/.cd,
mysymbol/.style={start anchor=center,end anchor=center,draw=none}
}
\title{\textbf{PICARD GROUPS OF ALGEBRAIC GROUPS AND AN AFFINENESS CRITERION}}
\author{Zev Rosengarten \thanks{MSC 2010: 14L10, 14L15, 14L17, 14L40, 20G15 \newline
Keywords: Algebraic Groups, Picard Groups. \newline
}}
\date{}
\begin{document}
\maketitle

\begin{abstract}
Nous prouvons qu’un groupe algébrique sur un corps $k$ est affine 
si et seulement si son groupe de Picard est de torsion, et que dans
ce cas, le groupe de Picard est fini si $k$ est parfait, et le produit 
d’un groupe  fini  d'ordre premier à $p$ par un $p$-groupe d'exposant
fini lorsque $k$ est imparfait de caractéristique $p$.
\\

We prove that an algebraic group over a field $k$ is affine precisely when its Picard group is torsion, and show that in this case the Picard group is finite when $k$ is perfect, and the product of a finite group of order prime to $p$ and a $p$-primary group of finite exponent when $k$ is imperfect of characteristic $p$.
\end{abstract}

The main purpose of this short paper is to prove the following two results on algebraic groups over arbitrary fields.

%%% Theorem: Torsion Pic = Affine
\begin{theorem}
\label{torsionpic=affine}
Let $G$ be a finite type group scheme over a field $k$. Then $G$ is affine if and only if $\Pic(G)$ is torsion.
\end{theorem}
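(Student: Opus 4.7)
The easy direction---that $\Pic(G)$ is torsion when $G$ is affine---reduces to the classical fact, due to Rosenlicht and Fossum--Iversen, that the Picard group of a smooth connected linear algebraic group over an algebraically closed field is finite. I would first base change to $\overline{k}$ (the kernel and cokernel of $\Pic(G)\to \Pic(G_{\overline k})$ being controlled by Galois cohomology of units on $G_{\overline k}$, hence torsion), then pass to the identity component $G^0$ (where $\Pic(G)\simeq \Pic(G^0)^n$ since $G$ is a finite disjoint union of $G^0$-cosets), and finally to its underlying reduced subscheme, where the comparison error comes from coherent cohomology of a nilpotent ideal and is therefore a $k$-vector space---torsion when $\operatorname{char}(k)>0$, and absent in characteristic zero by Cartier's theorem. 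Over an imperfect field the last step requires first passing to a purely inseparable extension of $k$, but the same principle applies.

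For the converse, I would argue contrapositively: if $G$ is not affine, then $\Pic(G)$ has a non-torsion class. Since the reductions above all preserve torsion-ness of $\Pic$ in both directions, I may assume $k=\overline k$ and $G$ is smooth connected with nontrivial abelian variety quotient, giving the Chevalley sequence
\[
1 \to L \to G \xrightarrow{f} A \to 1
\]
with $L$ smooth connected affine and $A$ a nonzero abelian variety. Pick an ample line bundle $\calL$ on $A$; the claim is that $f^*\calL$ has infinite order in $\Pic(G)$. This follows from the lemma that $\ker(f^* : \Pic(A) \to \Pic(G))$ is contained in $\Pic^0(A)$: the class of $\calL$ has infinite order in the N\'eron--Severi group $\Pic(A)/\Pic^0(A)$ (being ample), so its image in $\Pic(G)/f^*\Pic^0(A)$ does too, whence $f^*\calL$ is non-torsion in $\Pic(G)$, contradicting the hypothesis.

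The principal obstacle is establishing $\ker(f^*) \subset \Pic^0(A)$. I would approach it through the Rosenlicht--Colliot-Th\'el\`ene--Sansuc exact sequence
\[
\hat L \to \Pic(A) \xrightarrow{f^*} \Pic(G) \to \Pic(L)
\]
attached to the $L$-torsor $f$, which identifies $\ker(f^*)$ with the image of the character group $\hat L$ under the map $\chi\mapsto [G\times^{L,\chi}\A^1]$. One then has to verify that this image lands in $\Pic^0(A)$. I would prove this directly by showing that the line bundle associated to a character $\chi$ is translation-invariant along $A$---using that every translation on $A$ lifts to a left multiplication on $G$ which commutes with the $\chi$-twist up to canonical isomorphism---or equivalently by a theorem-of-the-cube argument applied to the pullback of $M_\chi$ to $G$.
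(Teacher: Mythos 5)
Your core mechanism is the right one and essentially matches the paper's: the kernel of $f^*\colon \Pic(A) \to \Pic(G)$ is the image of the character group of $L$ under pushout, those classes are translation-invariant (the paper's notion of ``primitive,'' which for an abelian variety is exactly $\Pic^0$), and an ample class survives because $\phi_{\mathscr{L}}$ is an isogeny. The easy direction via Fossum--Iversen is also fine, since torsion-ness is all that is needed there and the kernels of the various comparison maps (Galois cohomology of units, $p$-power torsion for inseparable base change, nilpotent thickenings) are all torsion.

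The genuine gap is the sentence ``the reductions above all preserve torsion-ness of $\Pic$ in both directions, [so] I may assume $k=\overline{k}$.'' For the contrapositive you need the implication: $\Pic(G_{\overline{k}})$ non-torsion $\Rightarrow$ $\Pic(G)$ non-torsion. This does not follow formally --- a non-torsion class downstairs need not lie in the image of $\Pic(G) \to \Pic(G_{\overline{k}})$, and the kernel/cokernel estimates that make the \emph{easy} direction work say nothing about it. Asserting it outright is circular, since by the theorem itself it is equivalent to ``$G_{\overline{k}}$ affine $\Rightarrow$ $G$ affine'' plus the statement being proved. This is precisely where the paper does its real work: over a perfect field it applies Chevalley's theorem over $k$ itself, so that $q^*(\mathscr{L})$ lives in $\Pic(G)$ from the start and only its non-torsion-ness is checked after base change (the legitimate direction); over an imperfect field, where no Chevalley quotient exists over $k$, it manufactures a map $q\colon G \to A$ over $k$ using a Frobenius twist $H^{(p^n)}$ and then needs Lemma \ref{isogabvar} to compare the resulting $A$ with the Chevalley quotient of $(G_{\overline{k}})_{\red}$. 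Your reduction can probably be repaired --- e.g., descend $q^*(\mathscr{L})$ to a finite subextension $k'/k$ and take the norm to $\Pic(G)$, using that Galois conjugates of ample bundles are ample and sums of amples are ample, so the norm still pulls back to $q^*(\text{ample})$ over $\overline{k}$ --- but that is an additional argument you have not supplied, and without it the converse direction does not close.
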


%%% Theorem: Pic(Affine Group) = Finite times p-primary
\begin{theorem}
\label{pic(affine)=finitetimespprimary}
Let $G$ be an affine group scheme of finite type over a field $k$.
\begin{itemize}
\item[(i)] If $k$ is perfect, then $\Pic(G)$ is finite.
\item[(ii)] If ${\rm{char}}(k) = p > 0$, then $\Pic(G)$ is the product of a finite group and a $p$-primary group of finite exponent.
\end{itemize}
\end{theorem}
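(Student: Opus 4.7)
I would prove (i) first and then deduce (ii) by base-changing to the perfect closure $K = k^{\mathrm{perf}}$. Both parts rely on the same dévissage: reduce to the smooth part of the identity component $G^0$, and then analyze the internal structure (unipotent radical and reductive quotient) of a smooth connected affine algebraic group.

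\textbf{Part (i), perfect case.} Since $k$ is perfect, $G_{\mathrm{red}}$ is a closed subgroup scheme of $G$ with $G/G_{\mathrm{red}}$ infinitesimal, and fppf descent along this finite flat torsor should change $\Pic$ only by a finite amount, reducing us to $G$ smooth. The sequence $1 \to G^0 \to G \to \pi_0(G) \to 1$ then exhibits $G$ as a finite étale cover of $\pi_0(G)$, and a Leray-style argument (using that $\calO(G^0_{\overline{k}})^\times / \overline{k}^\times$ is a finitely generated character lattice, hence has finite Galois cohomology in low degrees) further reduces to $G$ smooth and connected over $\overline{k}$. Over $\overline{k}$, finiteness of $\Pic(G)$ is classical: a Chevalley-type decomposition into a connected unipotent piece (on which $\Pic$ vanishes over $\overline{k}$) and a reductive quotient (for which $\Pic$ is finite, computed from the root datum) does the job. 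Descent back to $k$ then costs only finite Galois-cohomology contributions.

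\textbf{Part (ii), imperfect case.} After base change to $K = k^{\mathrm{perf}}$, part (i) gives $\Pic(G_K)$ finite. Since $K/k$ is a filtered colimit of finite purely inseparable, hence $p$-power degree, extensions, both the kernel and the cokernel of $\Pic(G) \to \Pic(G_K)$ are $p$-primary. The task is to upgrade this to \emph{bounded} exponent. I would fix a single finite purely inseparable $k'/k$ large enough that the structural data of $G_{k'}$ has stabilized --- that is, the component group, character lattice, and a Chevalley-type decomposition of $G^0_{k'}$ already coincide with those of $G_K$ --- so that any further purely inseparable extension $k''/k'$ contributes to $\Pic$ only by annihilation by $[k'':k']$. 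This should yield a uniform $p$-power exponent for both the cokernel (by descent of line bundles) and the kernel (by parallel analysis of the fppf cohomology of units). Combined with finiteness of $\Pic(G_K)$, this makes the prime-to-$p$ part of $\Pic(G)$ finite and forces bounded exponent on its $p$-part, giving the asserted product decomposition.

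\textbf{Main obstacle.} The chief difficulty lies in part (ii): producing a \emph{uniform} $p$-exponent that annihilates both the kernel and cokernel of $\Pic(G) \to \Pic(G_K)$. It is routine that each individual class is killed by some $p$-power, but in the imperfect setting new phenomena --- wound unipotent subquotients of $G^0$, non-smooth residual scheme structure, non-split tori requiring inseparable splitting --- threaten to force the required exponent to grow without bound. Overcoming this should rest on a careful finite-level descent showing that once one sufficiently large purely inseparable base change has absorbed all inseparable structural obstructions, any further purely inseparable extension contributes to $\Pic$ only through its degree, yielding the desired uniform bound.
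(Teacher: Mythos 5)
Your architecture is sound and genuinely different from the paper's in its main engine. The paper performs the same preliminary d\'evissage (kernel of pullback along a finite separable extension controlled by \cite[Th.\,1.13]{gjrwpic}, kernel along a finite purely inseparable extension killed by $p^n$ via Lemma \ref{picinseppullback}, then $\Pic(G)=\Pic(G_{\red})$ and reduction to $G$ smooth and connected), but its engine is different: it passes to a finite purely inseparable extension over which $G$ becomes \emph{unirational} \cite[Ch.\,V, Th.\,18.2(ii)]{borel} and then invokes the finiteness of the Picard group of a unirational smooth connected affine group \cite[Th.\,1.3]{rostrans}. You instead run the classical structure theory: kill the split unipotent radical (a torsor under a split unipotent group over an affine base is trivial, and $\Pic(\A^n\times X)=\Pic(X)$ for $X$ smooth), reduce to a reductive quotient whose Picard group is finite by the root-datum computation, and descend from $k_s$ using the unit theorem and the finiteness of ${\rm{H}}^1(\Gal(k_s/k),\widehat{G}(k_s))$. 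This is a legitimate and more self-contained route (it avoids \cite{rostrans} entirely); what the paper's route buys is brevity and the avoidance of any discussion of unipotent radicals over imperfect fields. (Your justification of the passage to $G_{\red}$ via ``descent along a finite flat torsor'' is off --- $G_{\red}\hookrightarrow G$ is not a torsor --- but for affine $G$ the conclusion $\Pic(G)=\Pic(G_{\red})$ is elementary, since $\Pic$ is insensitive to nilpotent thickenings of affine schemes.)

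The one step you must tighten is the finite-level statement in part (ii). The criterion you propose for choosing $k'$ --- that the component group, character lattice, and Chevalley-type decomposition of $G_{k'}$ already ``coincide with those of $G_K$'' --- is not sufficient: a wound $k$-form $U$ of $\Ga$ satisfies it already at $k'=k$ (its decomposition is ``everything is unipotent radical'' at every level), and yet $\Pic(U)$ can be infinite by \cite[Prop.\,5.9]{rostrans}. The condition you actually need is that the geometric unipotent radical descend to $k'$ \emph{and be $k'$-split}; both are achieved over a single finite purely inseparable extension, because the descent datum and the splitting filtration exist over $k^{\perf}$ and hence over some finite subextension $k^{1/p^n}$. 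With that correction, $\Pic(G_{k'})$ is finite outright (affine space times a reductive group), Lemma \ref{picinseppullback} kills $\ker(\Pic(G)\to\Pic(G_{k'}))$ by the fixed power $p^n$, and the asserted product decomposition follows. In particular the detour through $K=k^{\perf}$ and the analysis of the cokernel of $\Pic(G)\to\Pic(G_K)$ --- whose $p$-primarity you assert but do not justify, and which would require a norm argument --- becomes unnecessary.
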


In general, affine algebraic groups over imperfect fields need not have finite Picard groups. Indeed, \cite[Prop.\,5.9]{rostrans} shows that, when $k$ is either separably closed (and imperfect) or a local function field, then this fails for almost every nontrivial $k$-form of $\Ga$. (For examples of nontrivial forms of $\Ga$ over arbitrary imperfect fields, see, for example, \cite[Ex.\,5.8]{rostrans}.)

We now turn to the proofs of the theorems above.

%%% Lemma: Pic(X) ---> Pic(X_{k'}) has p^n-torsion kernel for inseparable extensions
\begin{lemma}
\label{picinseppullback}
Let $X$ be a scheme over a field $k$, and let $k'/k$ be a field extension such that $(k')^{p^n} \subset k$. Then for all $i$, the kernel of the pullback map
\[
{\rm{H}}^i_{\et}(X, \Gm) \rightarrow {\rm{H}}^i_{\et}(X_{k'}, \Gm)
\]
is $p^n$-torsion.
\end{lemma}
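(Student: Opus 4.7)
My plan is to exhibit a morphism $\phi \colon X \to X_{k'}$ with $\pi \circ \phi$ equal to the absolute $p^n$-Frobenius $F^n_X \colon X \to X$, and then invoke the classical fact that $(F^n_X)^*$ acts as multiplication by $p^n$ on ${\rm{H}}^i_{\et}(X, \Gm)$. Given such a $\phi$, any class $\alpha \in \ker(\pi^*)$ will then satisfy $p^n \alpha = (F^n_X)^*(\alpha) = \phi^*(\pi^*(\alpha)) = 0$, which is exactly the desired conclusion.

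The construction of $\phi$ is where the hypothesis $(k')^{p^n} \subset k$ enters. This hypothesis says precisely that the $p^n$-power map $k' \to k'$ factors through $k$, yielding a ring homomorphism $f \colon k' \to k$ given by $a \mapsto a^{p^n}$, and hence a morphism $\tau := \Spec(f) \colon \Spec k \to \Spec k'$. Letting $\sigma \colon \Spec k' \to \Spec k$ denote the structure map, one has $\sigma \circ \tau = F^n_{\Spec k}$. I would then define $\phi \colon X \to X \times_{\Spec k} \Spec k' = X_{k'}$ by specifying its two projections: $F^n_X$ in the first factor, and $\tau \circ s_X$ in the second, where $s_X \colon X \to \Spec k$ is the structure morphism. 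The required compatibility $s_X \circ F^n_X = \sigma \circ \tau \circ s_X$ holds since both sides equal $F^n_{\Spec k} \circ s_X$, so $\phi$ is well-defined, and $\pi \circ \phi = F^n_X$ by construction.

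The remaining input is the standard fact that $F^n_X$ acts as multiplication by $p^n$ on ${\rm{H}}^i_{\et}(X, \Gm)$: Frobenius acts on the sheaf $\Gm$ as the $p$-power endomorphism, which \emph{is} multiplication by $p$ in the abelian group structure of $\Gm$; by functoriality this descends to multiplication by $p$ on each ${\rm{H}}^i_{\et}(X, \Gm)$, and iterating gives $p^n$ for $F^n_X$. I do not anticipate any real obstacle beyond spotting the factorization; once $\phi$ is identified, the rest of the argument is entirely formal.
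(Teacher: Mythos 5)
Your proof is correct, and it takes a route that differs in packaging, though not really in substance, from the paper's. The paper works at the level of \'etale sheaves: it exhibits a transfer $f_*((\Gm)_{k'}) \to (\Gm)_k$ (the $p^n$-power map, which takes values in sections over $k$ precisely because $(k')^{p^n} \subset k$) whose composite with the unit map $(\Gm)_k \to f_*((\Gm)_{k'})$ is $[p^n]$, and then invokes exactness of pushforward along radicial morphisms to identify ${\rm{H}}^i(X, f_*(\Gm))$ with ${\rm{H}}^i(X_{k'}, \Gm)$. You instead work at the level of scheme morphisms, factoring $F^n_X$ through $\pi\colon X_{k'}\to X$; your construction of $\phi$ is correct (note that $a \mapsto a^{p^n}$ is a ring homomorphism only in characteristic $p$, but in characteristic $0$ the hypothesis forces $k'=k$ and the lemma is vacuous, so nothing is lost --- the paper's proof makes the same tacit assumption). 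The two arguments are close cousins: on structure sheaves your $\phi$ induces exactly the $p^n$-power map the paper uses, and the classical fact you cite --- that $F_X^*$ acts as multiplication by $p$ on ${\rm{H}}^i_{\et}(X,\Gm)$ for every $i$ --- is itself usually proved via the invariance of the \'etale site under universal homeomorphisms, which is essentially the same input as the paper's exactness of $f_*$. What your version buys is that, once the Frobenius black box is granted, everything else is pure functoriality of pullback with no sheaf-theoretic bookkeeping; moreover, for the application in the paper only $i=1$ is ever used, where your black box reduces to the elementary identity $F_X^*\mathscr{L} \cong \mathscr{L}^{\otimes p}$ for line bundles.
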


\begin{proof}
Let $f\colon {\rm{Spec}}(k') \rightarrow {\rm{Spec}}(k)$ denote the obvious morphism. The composition
\begin{equation}
\label{picinseppullbackeqn1}
(\Gm)_k \rightarrow f_*((\Gm)_{k'}) \xrightarrow{[p^n]} (\Gm)_k
\end{equation}
is the $p^n$th power map, hence induces multiplication by $p^n$ on ${\rm{H}}^i(X, \Gm)$. But pushforward through a radicial map is an exact functor on the category of \'etale sheaves, hence we have a canonical identification
\[
{\rm{H}}^i(X, f_*(\Gm)) = {\rm{H}}^i(X_{k'}, \Gm)
\]
which identifies the map on cohomology induced by the first map in (\ref{picinseppullbackeqn1}) with the pullback map of the lemma. It follows that anything killed by pullback is killed by $p^n$.
\end{proof}

\begin{proof}[Proof of Theorem $\ref{pic(affine)=finitetimespprimary}$]
We are free to pass to a finite field extension. Indeed, it suffices to show that we may pass to a finite separable extension or (when $k$ is imperfect) a finite purely inseparable extension. The separable case follows from \cite[Th.\,1.13]{gjrwpic}, while the purely inseparable case, which is only relevant when $k$ is imperfect, follows from Lemma \ref{picinseppullback}. We may in particular assume that $G_{\red} \subset G$ is a smooth $k$-subgroup scheme. By \cite[Lem.\,2.2.9]{rostate}, the map $\Pic(G) \rightarrow \Pic(G_{\red})$ is an isomorphism, so we may assume that $G$ is smooth. Choose a finite extension $k'/k$ such that $G_{k'} = \coprod G_i$, where the $G_i$ are the connected components of $G_{k'}$, and such that $G_i(k') \neq \emptyset$ for each $i$. Renaming $k'$ as $k$, we have that $G_i \simeq G^0$ as $k$-schemes, hence $\Pic(G) \simeq \prod_i \Pic(G^0)$. We may therefore replace $G$ by $G^0$, so we may assume that $G$ is smooth and connected. Finally, passing to a finite purely inseparable extension (in the imperfect case) and applying \cite[Ch.\,V, Th.\,18.2(ii)]{borel}, we may assume that $G$ is unirational, so $\Pic(G)$ is finite by \cite[Th.\,1.3]{rostrans}.
\end{proof}

Let $G$ be a $k$-group scheme, and let $m, \pi_i\colon G \times G \rightarrow G$ ($i = 1, 2$) denote the multiplication and projection maps, respectively. We say that an element $\mathscr{L} \in \Pic(G)$ is {\em primitive} if $m^*(\mathscr{L}) = \pi_1^*(\mathscr{L}) + \pi_2^*(\mathscr{L})$.

%%% Lemma: Action on Character Group
\begin{lemma}
\label{actchargp}
Given an exact sequence
\begin{equation}
\label{actchargpeqn1}
1 \longrightarrow G' \xlongrightarrow{j} G \longrightarrow G'' \longrightarrow 1
\end{equation}
of smooth connected group schemes over a field $k$, and a $k$-character $\chi\colon G' \rightarrow \Gm$, consider the pushout $E$ of the sequence along $\chi$, which is a $\Gm$-torsor over $G''$. Then $E$ is a primitive torsor.
\end{lemma}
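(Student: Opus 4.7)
The plan is to realize $E$ as a central extension of $k$-group schemes $1 \to \Gm \to E \to G'' \to 1$, and to read off primitivity from the group law on $E$. By construction $E = (G \times \Gm)/G'$, where $G'$ acts through $g' \cdot (g, t) = (g j(g'), \chi(g')^{-1} t)$. A direct computation shows that the coordinatewise multiplication on $G \times \Gm$ descends to $E$ precisely when $\chi$ is $G$-invariant under conjugation, i.e., $\chi(g g' g^{-1}) = \chi(g')$ for all $g \in G$ and $g' \in G'$.

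For this invariance the key point is that the assignment $g \mapsto \chi \circ c_g$, where $c_g$ denotes conjugation by $g$, defines a morphism from the smooth connected $k$-scheme $G$ to the character functor $\underline{\Hom}_{k\text{-gp}}(G', \Gm)$, which is representable by an \'etale $k$-group scheme. (This is standard for smooth connected $G'$; if $G'$ is non-affine, $\chi$ automatically factors through an affine quotient of $G'$ since $\Gm$ is affine, reducing to the classical case.) Any morphism from a connected $k$-scheme to an \'etale $k$-scheme is constant, so the assignment equals its value at $e \in G$, namely $\chi$, giving the invariance.

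With invariance in hand, $E$ is naturally a $k$-group scheme fitting into the central extension displayed above ($\Gm$ is central since the conjugation action of $E$ on $\Gm$ factors through $G''$ and is trivial by invariance of $\chi$). Its multiplication $m_E\colon E \times E \to E$ lies over $m\colon G'' \times G'' \to G''$ and is equivariant for the product map $\Gm \times \Gm \to \Gm$ on structure groups. This equivariance is precisely the data of an isomorphism of $\Gm$-torsors
\[
\pi_1^*(E) \otimes \pi_2^*(E) \xrightarrow{\sim} m^*(E)
\]
over $G'' \times G''$, since the left-hand side is by definition the contracted product $(E \times E) \wedge^{\Gm \times \Gm} \Gm$. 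This is exactly the primitivity of $E$.

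The main obstacle is the first step: rigorously justifying the $G$-invariance of $\chi$. The intuition — that a connected group cannot act nontrivially on a ``discrete'' character group — is classical, but it requires some care to formulate cleanly when $G'$ is not assumed affine. Once that step is in place, the remainder of the argument is essentially formal.
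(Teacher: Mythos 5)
Your overall strategy is the same as the paper's: realize $E$ as $(\Gm \times G)/G'$, descend the coordinatewise multiplication, observe that the only obstruction is the identity $\chi(g^{-1}g'g)=\chi(g')$, and read off primitivity from the resulting multiplication covering $m\colon G''\times G''\to G''$. That part is fine. The problem is the justification of the crucial invariance step, and you correctly flagged it as the main obstacle: your argument for it does not work as stated.

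The claim that $\underline{\Hom}_{k\text{-gp}}(G',\Gm)$ is representable by an \'etale $k$-group scheme is false whenever $G'$ has a nontrivial unipotent part, which is the typical case here. For instance, for $G'=\Ga$ the functor $R\mapsto \Hom_{R\text{-gp}}(\Ga,\Gm)$ has the nontrivial point $x\mapsto 1+\epsilon x$ over $R=k[\epsilon]/(\epsilon^2)$ (in any characteristic), so it is not formally unramified, hence not \'etale; in characteristic $0$ it is not even representable by a scheme. So ``a morphism from the connected scheme $G$ to an \'etale scheme is constant'' has nothing to apply to, and the rigidity you want cannot be extracted from the character functor of all of $G'$. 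The paper circumvents exactly this: it first replaces $G'$ by $G'/\mathscr{D}G'$ (the derived group is universally characteristic, so conjugation descends), then restricts attention to the maximal torus $T\subset G'$, using that restriction $\widehat{G'}(k)\hookrightarrow\widehat{T}(k)$ is injective (the unipotent quotient has no characters) and that $T$ is characteristic, hence normal in $G$. For the torus the rigidity is genuine: ${\rm{Aut}}_{T/k}$ really is \'etale, so the conjugation homomorphism $G\to{\rm{Aut}}_{T/k}$ from a connected group is trivial and $T$ is central. An alternative repair, closer in spirit to Lemma \ref{picexactseq} of the paper, is to apply the Rosenlicht unit theorem to the unit $u(g,g')=\chi(g^{-1}g'g)\chi(g')^{-1}$ on $G\times_k G'$: it factors as a product of a character of $G$ and a character of $G'$, and evaluating along $g'=1$ and $g=1$ forces $u=1$. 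Either way, one should also note that the identity is needed scheme-theoretically (on points valued in arbitrary $k$-algebras) for the descent of the multiplication map, which one gets from the statement on $\overline{k}$-points because all the schemes involved are smooth. Your remarks that $\chi$ factors through an affine quotient when $G'$ is non-affine, and that the equivariant map $E\times E\to E$ over $m$ is exactly an isomorphism $\pi_1^*(E)+\pi_2^*(E)\simeq m^*(E)$, are both correct.
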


\begin{proof}
We first claim that the conjugation action of $G$ on $\widehat{G'}(k) := \Hom_{k-{\rm{gps}}}(G', \Gm)$ is trivial. Indeed, $\mathscr{D}G' \subset G'$ is a universally characteristic $k$-subgroup through which the character group factors, so in order to prove the claim we may replace the sequence (\ref{actchargpeqn1}) by the sequence obtained by forming the quotient by this subgroup, and we may assume that $G'$ is commutative. Let $T \subset G'$ be the maximal $k$-torus. Then $T$ is a characteristic $k$-subgroup of $G'$, hence normal in $G$. Further, $G'/T$ is unipotent, hence has no nontrivial $k$-characters, so $\widehat{G'}(k) \subset \widehat{T}(k)$, and it suffices to check that the $G$-conjugation action on $\widehat{T}(k)$ is trivial. But in fact we claim that $T$ is central in $G$, which is sufficient. Indeed, the automorphism functor ${\rm{Aut}}_{T/k}$ of $T$ is represented by an \'etale $k$-scheme. (This is well-known, but is easily seen by using Galois descent to pass to the case when $T \simeq \Gm^n$, in which case the automorphism functor is ${\rm{GL}}_n(\Z)$.) Conjugation induces a $k$-homomorphism $G \rightarrow {\rm{Aut}}_{T/k}$ from a connected to an \'etale $k$-group scheme, which is therefore constant. This proves that the $G$-conjugation action on $\widehat{G'}(k)$ is trivial.

Now suppose we are in the situation of the lemma. We must show that there is an isomorphism $m^*(E) \simeq \pi_1^*(E) + \pi_2^*(E)$ of $\Gm$-torsors over $G'' \times G''$. This is equivalent to constructing a map $f\colon E \times E \rightarrow E$ such that $f(t_1\cdot e_1, t_2\cdot e_2) = (t_1t_2)\cdot f(e_1, e_2)$, and such that the following diagram commutes:
\[
\begin{tikzcd}
E \times E \arrow{r}{f} \arrow{d} & E \arrow{d} \\
G'' \times G'' \arrow{r}{m} & G''.
\end{tikzcd}
\]
The torsor $E$ may be described explicitly as $E = (\Gm \times G)/i(G')$, where $i$ is the map $g' \mapsto (\chi(g'), j(g')^{-1})$ with the obvious $\Gm$-action and map to $G''$. Then one may define a map $g \colon (\Gm \times G) \times (\Gm \times G) \rightarrow \Gm \times G$ by the formula
\[
(t_1, g_1) \times (t_2, g_2) \mapsto (t_1t_2, g_1g_2).
\]
We claim that this descends to a map $f\colon E \times E \rightarrow E$. Once this is verified, the map clearly satisfies the required conditions above. A straightforward check reveals that what one needs to verify is that $\chi(g^{-1}g'g) = \chi(g')$ for all $g \in G$ and $g' \in G'$, and this is exactly the triviality of the $G$-conjugation action on $\widehat{G'}(k)$ proved above.
\end{proof}

%%% Lemma: Exact sequence of group schemes and Pic
\begin{lemma}
\label{picexactseq}
Given an exact sequence
\begin{equation}
\label{picexactseqeqn3}
1 \longrightarrow G' \longrightarrow G \xlongrightarrow{q} G'' \longrightarrow 1
\end{equation}
of smooth connected group schemes over a field $k$, then every element of $$\ker(q^*\colon \Pic(G'') \rightarrow \Pic(G))$$ is primitive.
\end{lemma}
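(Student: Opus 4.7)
The plan is to identify each element of $\ker(q^*)$ with the pushout torsor $E_\chi$ of Lemma~\ref{actchargp} for a suitable character $\chi\colon G'\to \Gm$, so that primitivity follows directly from that lemma.

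Let $\mathscr{L}\in\ker(q^*)$ correspond to a $\Gm$-torsor $p\colon E\to G''$. The vanishing of $q^*[E]$ in $\Pic(G)$ supplies a lift $s\colon G\to E$ of $q$, and the fiber $E_{e_{G''}}$ is a $\Gm$-torsor over $\Spec(k)$, hence trivial by Hilbert~90. Fixing a $k$-point $e_E\in E_{e_{G''}}(k)$ and exploiting the freedom to rescale $s$ by $\Gm$, I arrange $s(e_G)=e_E$. Under the induced identification $E_{e_{G''}}\simeq\Gm$ (sending $e_E$ to $1$), the composition $s\circ j$ becomes a pointed $k$-morphism $\chi\colon G'\to\Gm$.

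The key step, and the expected main obstacle, is to promote $\chi$ to a character and establish the stronger identity
\[
s(g\cdot j(g'))\;=\;\chi(g')\cdot s(g)\qquad\text{for all } g\in G,\ g'\in G'.
\]
The two sides lie in the same fiber $E_{q(g)}$, so their ratio defines a morphism $\delta\colon G\times G'\to\Gm$. Since $G\times G'$ is smooth and connected, Rosenlicht's theorem writes $\delta$ as $\delta(g,g')=c\cdot \chi_G(g)\cdot \chi_{G'}(g')$ for some $c\in k^\times$ and characters $\chi_G\in\widehat{G}(k)$, $\chi_{G'}\in\widehat{G'}(k)$. Specializing at $g'=e_{G'}$ gives $\delta\equiv 1$ there, forcing $c=1$ and $\chi_G=1$; specializing at $g=e_G$ identifies $\chi_{G'}$ with $\chi$. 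Substituting $g=j(g_1')$ in the resulting identity then yields the character property $\chi(g_1' g_2')=\chi(g_1')\chi(g_2')$.

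With this in hand, the displayed identity is exactly what is needed to make the $\Gm$-equivariant map $\Gm\times G\to E$, $(t,g)\mapsto t\cdot s(g)$, factor through the pushout quotient of Lemma~\ref{actchargp} (computed with $\chi^{-1}$ in place of $\chi$, a harmless convention change), producing an isomorphism of $\Gm$-torsors $E_{\chi^{-1}}\xrightarrow{\sim}E$ over $G''$. Lemma~\ref{actchargp} then applies to give the primitivity of $\mathscr{L}$.
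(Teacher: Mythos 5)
Your argument is correct, and it rests on the same two pillars as the paper's proof---the Rosenlicht unit theorem and Lemma \ref{actchargp}---but reaches them by a more direct, torsor-theoretic route. The paper identifies $\ker(q^*)$ with the \v{C}ech cohomology group $\check{{\rm{H}}}^1(q, \Gm)$ of the fppf cover $q$, transports the cocycle condition through the shearing isomorphism $G \times_k G' \simeq G \times_{G''} G$, applies Rosenlicht to conclude that every cocycle is a character of $G'$, and then must identify the resulting surjection $\widehat{G'}(k) \twoheadrightarrow \ker(q^*)$ with the pushout map (up to a universal sign, as the paper concedes parenthetically). You instead work with the torsor $E$ itself and a normalized trivializing section $s$ of $q^*E$; your unit $\delta$ on $G \times_k G'$ is exactly the \v{C}ech cocycle of that trivialization in disguise, so the same Rosenlicht input appears, but you bypass the \v{C}ech complex and replace the somewhat delicate ``this surjection is none other than the pushout map'' step by an explicit $\Gm$-equivariant map $\Gm \times G \rightarrow E$ descending to an isomorphism $E_{\chi^{\pm 1}} \xrightarrow{\sim} E$; the sign ambiguity you flag is the same one the paper acknowledges and is harmless since $\chi^{-1}$ is again a character. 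Two small wrinkles, neither fatal: first, with $\delta$ literally defined as the ratio of the two sides of your displayed identity, the specialization at $g = e_G$ gives $\chi_{G'} = 1$ (hence $\delta \equiv 1$) rather than ``$\chi_{G'} = \chi$''---the latter is what you get from the convention $\delta(g,g') = s(g\,j(g'))/s(g)$; either way the displayed identity follows, and the substitution $g = j(g_1')$ then yields multiplicativity of $\chi$ as you say. Second, since $G$ need not have many $k$-points, all of these identities must be read as identities of scheme morphisms (equivalently, on $T$-points for varying $T$), which Rosenlicht's theorem in the form of \cite[Cor.\,1.2]{conradrosenlicht} does deliver.
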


\begin{proof}
By descent theory, $\ker(q^*)$ may be identified with the \v{C}ech cohomology group $\check{{\rm{H}}}^1(q, \Gm)$ associated to the fppf cover $q$. This group is the middle cohomology of the sequence
\begin{equation}
\label{picexactseqeqn1}
\Gamma(G, \Gm) \xlongrightarrow{p_1^* - p_2^*} \Gamma(G \times_{G''} G, \Gm) \xlongrightarrow{p_{23}^* - p_{13}^* + p_{12}^*} \Gamma(G \times_{G''} G \times_{G''} G, \Gm),
\end{equation}
where the $p_i$ and $p_{ij}$ denote the obvious projections. In particular, we obtain a surjection from the kernel of the second map above to $\ker(q^*)$. We have isomorphisms
\[
G \times_k G' \xrightarrow{\sim} G \times_{G''} G, \hspace{.3 in} (g, g') \mapsto (g, gg')
\]
\[
G \times_k G' \times_k G' \xrightarrow{\sim} G \times_{G''} G \times_{G''} G, \hspace{.3 in} (g, g'_1, g'_2) \mapsto (g, gg'_1, gg'_1g'_2).
\]
Under these isomorphisms, the second map in the sequence (\ref{picexactseqeqn1}) is transported to the map
\begin{equation}
\label{picexactseqeqn2}
\Gamma(G \times_k G', \Gm) \xlongrightarrow{f_{23}^* - f_{13}^* + f_{12}^*} \Gamma(G \times_k G' \times_k G', \Gm),
\end{equation}
where the $f_{ij}$ are defined by the following formulas:
\[
(g, g'_1, g'_2) \mapsto 
\begin{cases}
(gg'_1, g'_2), & f_{23}\\
(g, g'_1g'_2), & f_{13} \\
(g, g'_1), & f_{12}.
\end{cases}
\]
Now suppose given $h \in \Gamma(G \times_k G', \Gm)$ which lies in the kernel of the map (\ref{picexactseqeqn2}). Then restricting the equality $(h \circ f_{23})(h \circ f_{13})^{-1}(h \circ f_{12})$ to the point $(1, 1, 1)$ of $G \times_k G' \times_k G'$ shows that $h(1, 1) = 1$. It follows from the Rosenlicht Unit Theorem (more precisely, a corollary of that theorem \cite[Cor.\,1.2]{conradrosenlicht}) that $h$ is a character of $G \times_k G'$: say $h = \chi \chi'$, where $\chi$ is a character of $G$ and $\chi'$ a character of $G'$. A simple calculation shows that the fact that $h$ lies in the kernel of the map (\ref{picexactseqeqn2}) is equivalent to the identity $\chi(gg') = 1$ for all $g \in G, g' \in G'$. Thus $\chi = 1$ -- that is, $h = \chi' \in \widehat{G'}(k)$ is a character of $G'$. We thus obtain a surjection
\[
\widehat{G}'(k) \twoheadrightarrow \ker(q^*).
\]
This surjection is functorial in the exact sequence

and via the identification of $\ker(q^*)$ with $\check{{\rm{H}}}^1(q, \Gm)$, this surjection is none other than the pushout map which sends a character $\chi \colon G' \rightarrow \Gm$ to the $\Gm$-torsor over $G''$ obtained by pushing out the sequence (\ref{picexactseqeqn3}) along $\chi$. (Actually, it could also be the inverse of this map, since the identification of $\ker(q^*)$ with $\check{{\rm{H}}}^1(q, \Gm)$ is only well-defined up to a universal choice of sign.) It follows that any element of $\ker(q^*)$ is obtained by such a pushout, and Lemma \ref{actchargp} implies that such a pushout is primitive.
\end{proof}

%%% Lemma: Pullback of ample line bundle not torsion
\begin{lemma}
\label{amplepullback}
Suppose given an exact sequence
\[
1 \longrightarrow H \longrightarrow G \xlongrightarrow{q} A \longrightarrow 1
\]
of smooth connected group schemes over a field $k$, with $A$ a nonzero abelian variety. Then for any ample $\mathscr{L} \in \Pic(A)$, the pullback $q^*(\mathscr{L}) \in \Pic(G)$ is not torsion.
\end{lemma}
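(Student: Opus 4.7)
The strategy is to argue by contradiction: supposing $q^*\mathscr{L}$ to be torsion will produce a positive tensor power of $\mathscr{L}$ that is both ample and primitive on $A$, and this will be incompatible with $A$ being a nonzero abelian variety.

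Suppose for contradiction that $q^*\mathscr{L}$ has finite order $n \geq 1$ in $\Pic(G)$. Then $q^*(\mathscr{L}^{\otimes n}) \cong \mathcal{O}_G$, so $\mathscr{L}^{\otimes n}$ lies in $\ker(q^*\colon \Pic(A) \to \Pic(G))$. Since $H$, $G$, and $A$ are all smooth and connected, Lemma~\ref{picexactseq} applies and asserts that $\mathscr{L}^{\otimes n}$ is primitive on $A$:
\[
m_A^*(\mathscr{L}^{\otimes n}) \cong \pi_1^*(\mathscr{L}^{\otimes n}) \otimes \pi_2^*(\mathscr{L}^{\otimes n}).
\]

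The next step is the standard observation that primitivity on an abelian variety forces translation-invariance. I would extend scalars to an algebraic closure $\bar{k}$ (under which torsion, primitivity, and ampleness are all preserved) and restrict the primitivity isomorphism to $\{a\} \times A_{\bar{k}}$ for each $a \in A(\bar{k})$. The left-hand side restricts to $t_a^*(\mathscr{L}^{\otimes n})$; the factor $\pi_1^*(\mathscr{L}^{\otimes n})$ restricts to the trivial line bundle on $A_{\bar{k}}$ (being pulled back from the point $\{a\}$), while $\pi_2^*(\mathscr{L}^{\otimes n})$ restricts to $\mathscr{L}^{\otimes n}$ itself. Thus $t_a^*(\mathscr{L}^{\otimes n}) \cong \mathscr{L}^{\otimes n}$ for every $a$, placing $\mathscr{L}^{\otimes n}$ in $\Pic^0(A_{\bar{k}})$ and in particular making it numerically trivial.

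The contradiction is then immediate: $\mathscr{L}^{\otimes n}$ is ample (as a positive tensor power of the ample bundle $\mathscr{L}$), and $A$, being a nonzero abelian variety, is positive-dimensional, so $A_{\bar{k}}$ contains a curve $C$ with $\deg(\mathscr{L}^{\otimes n}|_C) > 0$, contradicting numerical triviality. I do not anticipate any serious obstacle: Lemma~\ref{picexactseq} does the bulk of the work by reducing torsion-ness of $q^*\mathscr{L}$ to primitivity of a power of $\mathscr{L}$ on $A$, and the remaining implication ``primitive $\Rightarrow$ translation-invariant'' is a one-line restriction trick standard for abelian varieties.
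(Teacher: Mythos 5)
Your proposal is correct and follows essentially the same route as the paper: reduce to a tensor power of $\mathscr{L}$ lying in $\ker(q^*)$, invoke Lemma~\ref{picexactseq} to conclude that this power is primitive, and then derive a contradiction from the ampleness of $\mathscr{L}$ on the nonzero abelian variety $A$. The only (immaterial) difference is in the final step, where the paper phrases the contradiction via Mumford's criterion that $\phi_{\mathscr{L}}$ is an isogeny for ample $\mathscr{L}$, while you pass through translation-invariance, membership in $\Pic^0$, and numerical triviality.
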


\begin{proof}
Replacing $\mathscr{L}$ with a power of itself, it suffices to show that $q^*(\mathscr{L}) \neq 0$. But if it were $0$, then Lemma \ref{picexactseq} would imply that $\mathscr{L}$ is primitive. In particular, the homomorphism $\phi_{\mathscr{L}}\colon A \rightarrow A$ defined by the formula $a \mapsto t_a^*(\mathscr{L}) \otimes \mathscr{L}^{-1}$ -- where $t_a$ denotes translation by $a$ -- is the $0$ map. But by \cite[Ch,.\,II, \S6, p.\,60, Application I]{mumford}, the ampleness of $\mathscr{L}$ is equivalent to the map $\phi_{\mathscr{L}}$ being an isogeny, and this yields a contradiction because $A$ is nonzero.
\end{proof}

%%% Lemma: Isogeny induces isogeny on abelian variety parts
\begin{lemma}
\label{isogabvar}
Let $k$ be a field, and suppose given an isogeny $\phi\colon G_1 \rightarrow G_2$ of finite type $k$-group schemes (that is, a faithfully flat $k$-homomorphism with finite kernel). If, in the diagram below, the rows are exact sequences of finite type $k$-group schemes with $L_i$ affine and $A_i$ abelian varieties ($i = 1, 2$), and $L_1$ smooth and connected, then the diagram of solid arrows extends uniquely to a commutative diagram with dotted arrows, and the map $f$ is an isogeny.
\[
\begin{tikzcd}
1 \arrow{r} & L_1 \arrow{r} \arrow[d, dashrightarrow] & G_1 \arrow{d}{\phi} \arrow{r}{\pi} & A_1 \arrow{r} \arrow[d, dashrightarrow, "f"] & 1 \\
1 \arrow{r} & L_2 \arrow{r} & G_2 \arrow{r} & A_2 \arrow{r} & 1.
\end{tikzcd}
\]
\end{lemma}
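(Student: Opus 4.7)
The plan is to first establish that $\phi(L_1) \subset L_2$, from which both dotted arrows and the isogeny property of $f$ will follow by routine diagram chasing.

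To show $\phi(L_1) \subset L_2$, I would consider the composition $L_1 \hookrightarrow G_1 \xrightarrow{\phi} G_2 \twoheadrightarrow A_2$, which is a $k$-homomorphism from the smooth connected affine $k$-group scheme $L_1$ to the abelian variety $A_2$. By the standard rigidity result that any $k$-homomorphism from a smooth connected affine algebraic group to an abelian variety is trivial (since affine varieties admit no nonconstant morphisms to abelian varieties, and a group homomorphism which is constant must be zero), this composition vanishes. Hence $\phi$ carries $L_1$ into $\ker(G_2 \twoheadrightarrow A_2) = L_2$.

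This inclusion provides the left-hand dotted arrow $L_1 \to L_2$, which is unique since $L_2 \hookrightarrow G_2$ is a monomorphism. Passing to quotients then gives the right-hand dotted arrow $f \colon A_1 \to A_2$, unique because $\pi \colon G_1 \twoheadrightarrow A_1$ is an epimorphism. To see that $f$ is an isogeny, surjectivity is immediate from the surjectivity of $\phi$ and of $G_2 \twoheadrightarrow A_2$. For finiteness of the kernel, identify $\ker(f)$ as $\phi^{-1}(L_2)/L_1$ inside $A_1$. Since $\phi$ is an isogeny, $\phi^{-1}(L_2)$ is an extension of the affine $L_2$ by the finite group scheme $\ker(\phi)$, and is therefore affine. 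Thus $\ker(f)$ is a closed affine subgroup scheme of the abelian variety $A_1$. Being simultaneously proper (closed in $A_1$) and affine over $k$, it is finite.

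The main (and essentially only nontrivial) step is the rigidity input $\phi(L_1) \subset L_2$, which is where the hypothesis that $L_1$ is smooth and connected is used in an essential way; without connectedness one could not kill the composition $L_1 \to A_2$, and without smoothness the usual rigidity lemma need not apply directly (though one could reduce to the smooth connected case by passing to $(L_1)^0_{\red}$). Everything else is a diagram chase combined with the standard fact that closed affine subgroup schemes of abelian varieties are finite.
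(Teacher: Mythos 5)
Your proof is correct. The first half --- killing the composite $L_1 \hookrightarrow G_1 \to G_2 \to A_2$ by rigidity of maps from smooth connected affine groups to abelian varieties, then producing the dotted arrows by the universal property of the fppf quotients --- is exactly the paper's argument. Where you diverge is in proving that $\ker(f)$ is finite. You argue directly: $\pi^{-1}(\ker f) = \phi^{-1}(L_2)$ is affine (a $\ker(\phi)$-torsor over the affine $L_2$), hence $\ker(f) = \phi^{-1}(L_2)/L_1$ is affine by the theorem that fppf quotients of affine finite type $k$-group schemes by normal closed subgroup schemes are affine, and an affine closed subgroup scheme of an abelian variety is proper and affine, hence finite. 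The paper instead argues by contradiction: if $\ker(f)$ were not finite it would contain a nonzero abelian subvariety $B$, and then $\pi^{-1}(B)$ modulo a finite kernel would be simultaneously a group with a nonzero abelian variety quotient and (being identified with a subgroup of $L_2$) affine, which is absurd. Your route is arguably cleaner and avoids having to extract an abelian subvariety from a possibly non-smooth positive-dimensional subgroup scheme over a general field, at the cost of invoking the (standard but nontrivial) affineness of quotients of affine group schemes; you should cite that fact explicitly (e.g.\ [SGA3, VI$_{\rm B}$]) since it is the one non-formal input in your finiteness argument. Both arguments ultimately exploit the same tension between affineness and properness, so the difference is one of packaging rather than substance.
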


\begin{proof}
That the diagram extends uniquely, if it extends at all, is clear. That it extends is equivalent to the claim that the composition $L_1 \hookrightarrow G_1 \xrightarrow{\phi} G_2 \rightarrow A_2$ is trivial, and this follows from the fact that any $k$-homomorphism from a smooth connected affine $k$-group to an abelian variety is trivial. Finally, it remains to check that $f$ is an isogeny. It is surjective because $\phi$ is, so if it is not an isogeny then its kernel contains a nonzero abelian variety $B \subset \ker(f)$. Then $\pi^{-1}(B) \rightarrow L_2$ has finite kernel $F$, hence letting $G' := \pi^{-1}(B)/F$ and $L'$ the image of $\pi^{-1}(B)$ in $L_2$, we obtain a $k$-isomorphism $\psi\colon G' \xrightarrow{\sim} L'$ with the former an extension of a nonzero abelian variety $C$ by a finite type affine $k$-group $M$, and the latter an affine $k$-group of finite type. Then $G'/M \simeq L'/\psi(M)$, with the former a nonzero abelian variety and the latter affine. This is a contradiction, so $f$ is an isogeny.
\end{proof}

\begin{proof}[Proof of Theorem $\ref{torsionpic=affine}$]
Theorem \ref{pic(affine)=finitetimespprimary} implies that affine algebraic groups have torsion Picard groups, so it only remains to show that finite type $k$-groups which are not affine have non-torsion elements in their Picard groups. Because $G$ is affine if and only if $G^0$ is, we may assume that $G$ is connected. 

First we treat the case in which $k$ is perfect. Then $G_{\red} \subset G$ is a smooth connected $k$-subgroup scheme, and $G$ is affine precisely when $G_{\red}$ is, so we may assume that $G$ is smooth and connected. Chevalley's Theorem (which again uses the perfection of $k$) then furnishes an exact sequence
\[
1 \longrightarrow L \longrightarrow G \longrightarrow A \longrightarrow 1
\]
with $L$ a smooth connected affine $k$-group scheme, and $A$ an abelian variety. Because $G$ is not affine by assumption, $A$ is nonzero. Lemma \ref{amplepullback} implies that any ample line bundle on $A$ pulls back to a non-torsion element of $\Pic(G)$.

Next we treat the case in which ${\rm{char}}(k) = p > 0$. By \cite[VII$_{\rm{A}}$, Prop.\,8.3]{sga3}, there is a normal infinitesimal $k$-subgroup scheme $I \trianglelefteq G$ such that $H := G/I$ is smooth (and necessarily connected, because $G$ is). Because $G$ is not affine, neither is $H$. By Chevalley's Theorem, $H_{k_{\perf}}$ can be written as an extension of an abelian variety by a smooth connected affine $k_{\perf}$-group scheme. It follows that there is such an extension over $k^{1/p^n}$ for some integer $n \geq 0$. Extending scalars via the $p^n$th-power isomorphism $k^{1/p^n} \xrightarrow{\sim} k$, we obtain an exact sequence
\[
1 \longrightarrow L \longrightarrow H^{(p^n)} \longrightarrow A \longrightarrow 1
\]
with $L$ a smooth connected affine $k$-group and $A/k$ a nonzero abelian variety (because $H$, hence also $H^{(p^n)}$, is not affine). Denote by $q$ the composite map
\[
G \rightarrow H \xrightarrow{F_{H/k}^{(p^n)}} H^{(p^n)} \rightarrow A,
\]
where the map in middle is the $n$-fold relative Frobenius map. Let $\mathscr{L}$ be an ample line bundle on $A$. We claim that the pullback $q^*(\mathscr{L}) \in \Pic(G)$ is not torsion.

It suffices to check this after extending scalars to $\overline{k}$, hence we are now free to assume that $k = \overline{k}$. Then $G_{\red} \subset G$ is a smooth connected $k$-subgroup scheme, hence by Chevalley there is an exact sequence
\[
1 \longrightarrow M \longrightarrow G_{\red} \xlongrightarrow{g} B \longrightarrow 1
\]
with $M$ a smooth connected affine $k$-group and $B/k$ an abelian variety. The composition $G_{\red} \subset G \rightarrow H \xrightarrow{F_{H/k}^{(p^n)}} H^{(p^n)}$ is an isogeny: each map in this sequence is set-theoretically surjective, and $G_{\red}$ and $H^{(p^n)}$ are smooth, so the map is fppf, and its kernel is finite because that is true of each map in the above composition. Lemma \ref{isogabvar} therefore implies that we have a unique commutative diagram with exact rows
\[
\begin{tikzcd}
1 \arrow{r} & M \arrow{r} \arrow{d} & G_{\red} \arrow{d} \arrow{r}{g} & B \arrow{r} \arrow{d}{f} & 1 \\
1 \arrow{r} & L \arrow{r} & H^{(p^n)} \arrow{r} & A \arrow{r} & 1,
\end{tikzcd}
\]
and that $f$ is an isogeny. In particular, $f^*(\mathscr{L})$ is still ample, hence by Lemma \ref{amplepullback}, $g^*f^*(\mathscr{L}) \in \Pic(G_{\red})$ is not torsion. The commutative diagram
\[
\begin{tikzcd}
G_{\red} \arrow{r} \arrow{d}{g} & G \arrow{d}{q} \\
B \arrow{r}{f} & A
\end{tikzcd}
\]
then shows that $q^*(\mathscr{L})$ is not torsion either.
\end{proof}

\noindent {\bf Acknowledgements.} It is a pleasure to thank Nguy\~{\^{e}}n Qu\'{\^{o}}c Th\'{\u{a}}ng for pointing out to me the reference \cite{gjrwpic}, and in particular Theorem 1.13 of that paper.

\noindent \address
\vspace{.3 in}

\noindent \email

\end{document}